\newtheorem{theorem}{Theorem}
\newtheorem{lemma}{Lemma}[theorem]
\newtheorem*{definition}{Definition}
\tikzstyle{startstop} = [rectangle, rounded corners, minimum width=3cm, minimum height=1cm,text centered, draw=black, fill=red!30]
\tikzstyle{process} = [rectangle, minimum width=3cm, minimum height=1cm, text centered, draw=black, fill=orange!30]
\tikzstyle{decision} = [diamond, minimum width=3cm, minimum height=1cm, text centered, draw=black, fill=green!30]
\tikzstyle{arrow} = [thick,->,>=stealth]
\pgfplotsset{compat=1.16}
\title{\textbf{Existence of Deadlock-Free Routing for Arbitrary Networks}}
\author{
  Uri Mendlovic \thanks{Google Research, \texttt{urimend@google.com}}
  \and
  Yossi Matias \thanks{Google Research, \texttt{yossi@google.com}}
}
\date{March 2025}
\begin{document}

\maketitle

\begin{abstract}
Given a network of routing nodes, represented as a directed graph, we prove the following necessary and sufficient condition for the existence of deadlock-free message routing: The directed graph must contain two edge-disjoint directed trees rooted at the same node, one tree directed into the root node and the other directed away from the root node.

While the sufficiency of this condition is known, its necessity, to the best of our knowledge, has not been previously recognized or proven. Although not directly applicable to the construction of deadlock-free routing schemes, this result provides a fundamental insight into the nature of deadlock-free networks and may lead to the development of improved tools for designing and verifying such schemes.
\end{abstract}

\section{Background}
In their seminal work \cite{seminal}, Dally and Seitz introduced a model for deadlock avoidance. In their model, the network is defined as a strongly connected directed graph $G=(V, E)$ where the vertices $V$ are the compute nodes and the directed edges $E$ are directed communication channels between nodes. Messages are routed through the network using a deterministic routing scheme, depending only on the source and destination of the message.

Given a routing scheme, we define a dependency from $e_1$ to $e_2$ if there exists a path in the routing scheme that has $e_2$ immediately after $e_1$. We then define the dependency directed graph $\mathcal{D}=(E, D)$, whose \emph{vertices} are the channels of the original network and its directed edges represent the dependencies between channels. \cite{seminal} proved that a routing is deadlock-free if and only if its dependency graph is acyclic.

An example network, deadlock-free routing and its dependency graph are given in \Cref{fig:example-deps}.

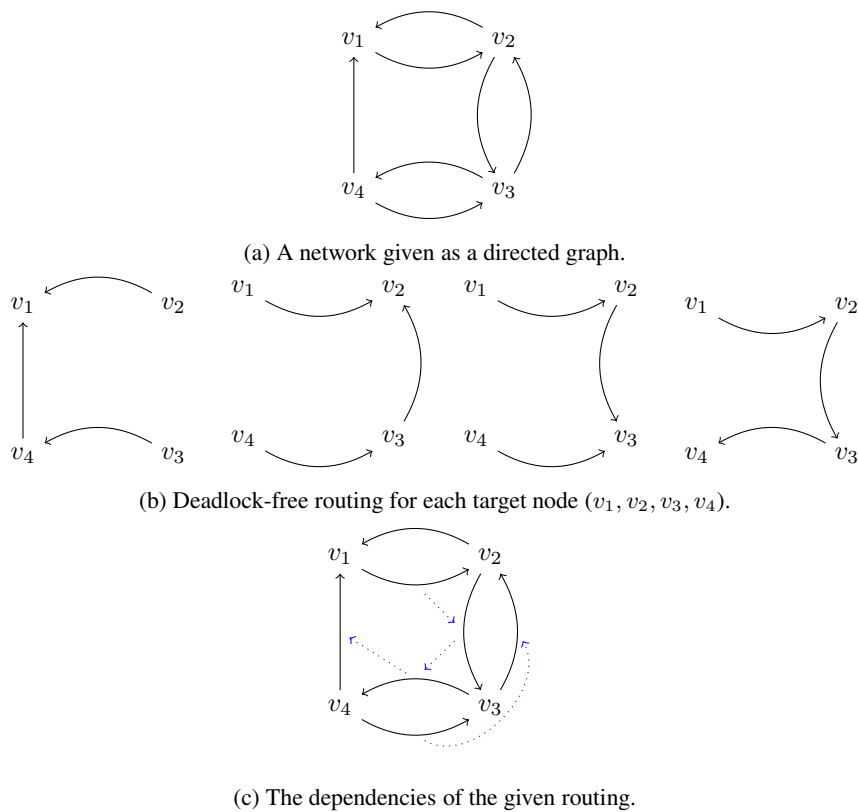
\begin{figure}[ht]
    \centering
    \begin{subfigure}{0.5\textwidth}
        \centering
        \begin{tikzpicture}[node distance=2cm]
            \node (v1) {$v_1$};
            \node[right of=v1] (v2) {$v_2$};
            \node[below of=v2] (v3) {$v_3$};
            \node[below of=v1] (v4) {$v_4$};
            
            \draw[->, bend right] (v1) to (v2);
            \draw[->, bend right] (v2) to (v3);
            \draw[->, bend right] (v3) to (v4);
            \draw[->] (v4) to (v1);
            \draw[->, bend right] (v2) to (v1);
            \draw[->, bend right] (v3) to (v2);
            \draw[->, bend right] (v4) to (v3);
        \end{tikzpicture}
        \caption{A network given as a directed graph.}
        \label{fig:network}
    \end{subfigure}
    \vfill
    \begin{subfigure}{0.95\textwidth}
        \centering
        \begin{tikzpicture}[node distance=2cm]
            \node (v1) {$v_1$};
            \node[right of=v1] (v2) {$v_2$};
            \node[below of=v2] (v3) {$v_3$};
            \node[below of=v1] (v4) {$v_4$};
            
            \draw[->, bend right] (v3) to (v4);
            \draw[->] (v4) to (v1);
            \draw[->, bend right] (v2) to (v1);
        \end{tikzpicture}
        \hfill
        \hfill
        \begin{tikzpicture}[node distance=2cm]
            \node (v1) {$v_1$};
            \node[right of=v1] (v2) {$v_2$};
            \node[below of=v2] (v3) {$v_3$};
            \node[below of=v1] (v4) {$v_4$};
            
            \draw[->, bend right] (v1) to (v2);
            \draw[->, bend right] (v3) to (v2);
            \draw[->, bend right] (v4) to (v3);
        \end{tikzpicture}
        \hfill
        \hfill
        \begin{tikzpicture}[node distance=2cm]
            \node (v1) {$v_1$};
            \node[right of=v1] (v2) {$v_2$};
            \node[below of=v2] (v3) {$v_3$};
            \node[below of=v1] (v4) {$v_4$};
            
            \draw[->, bend right] (v1) to (v2);
            \draw[->, bend right] (v2) to (v3);
            \draw[->, bend right] (v4) to (v3);
        \end{tikzpicture}
        \hfill
        \hfill
        \begin{tikzpicture}[node distance=2cm]
            \node (v1) {$v_1$};
            \node[right of=v1] (v2) {$v_2$};
            \node[below of=v2] (v3) {$v_3$};
            \node[below of=v1] (v4) {$v_4$};
            
            \draw[->, bend right] (v1) to (v2);
            \draw[->, bend right] (v2) to (v3);
            \draw[->, bend right] (v3) to (v4);
        \end{tikzpicture}
        \caption{Deadlock-free routing for each target node ($v_1, v_2, v_3, v_4$).}
        \label{fig:routing}
    \end{subfigure}
    \vfill
    \begin{subfigure}{0.5\textwidth}
        \centering
        \begin{tikzpicture}[node distance=2cm]
            \node (v1) {$v_1$};
            \node[right of=v1] (v2) {$v_2$};
            \node[below of=v2] (v3) {$v_3$};
            \node[below of=v1] (v4) {$v_4$};
            
            \draw[->, bend right] (v1) to node[midway] (e12) {} (v2);
            \draw[->, bend right] (v2) to node[midway] (e23) {}(v3);
            \draw[->, bend right] (v3) to node[midway] (e34) {}(v4);
            \draw[->] (v4) to node[midway] (e41) {}(v1);
            \draw[->, bend right] (v2) to node[midway] (e21) {}(v1);
            \draw[->, bend right] (v3) to node[midway] (e32) {}(v2);
            \draw[->, bend right] (v4) to node[midway] (e43) {}(v3);
        
            \draw[->, dotted, blue] (e34) to (e41);
            \draw[->, dotted, blue] (e43) to[out=-30,in=-60] (e32);
            \draw[->, dotted, blue] (e12) to (e23);
            \draw[->, dotted, blue] (e23) to (e34);
        \end{tikzpicture}
        \caption{The dependencies of the given routing.}
        \label{fig:dependencies}
    \end{subfigure}
    \caption{An example network, deadlock-free routing and dependency graph.}
    \label{fig:example-deps}
\end{figure}

Given a network $G=(V, E)$, one can ask whether deadlock-free routing can be constructed. We note that deadlock-free routing is always possible by adding virtual channels, splitting some of the physical channels into independent virtual ones. This was studied by \cite{routage} (this paper is written in French, see \cite{survey} for an English survey), which proved that deadlock-free routing exists for any network by splitting some of its channels to \emph{two} virtual channels. This routing, however, may have suboptimal path lengths. If the longest path length (also known as the diameter) of the graph is $d$, an optimal (in terms of path lengths) deadlock-free routing is possible by splitting each edge into at most $d$ virtual channels.

\section{Main Result}
We now ask whether deadlock-free routing can be constructed for a given network $G=(V, E)$ \emph{without adding virtual channels}. It should be mentioned that $G=(V, E)$ may contain virtual channels in the form of multiple edges between the same pair of vertices, which we consider distinct for the purpose of this analysis. We note that the desired routing is not required to achieve optimal path lengths or any other efficiency requirement.

We now state our main result.
\begin{theorem}
\label{thm:main}
Deadlock-free routing can be constructed for a network $G=(V, E)$ if and only if $G$ contains two edge-disjoint directed trees rooted at the same vertex $v$, one tree directed into $v$ and the other directed away from $v$.
\end{theorem}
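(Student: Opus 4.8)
The plan is to prove both implications as dual constructions tied together by the topological order of the dependency graph. For the sufficiency direction (the known one), given edge-disjoint trees $T_{\mathrm{in}}$ (directed into $v$) and $T_{\mathrm{out}}$ (directed out of $v$), I would route every message from $s$ to $t$ by first following the unique $T_{\mathrm{in}}$-path from $s$ to $v$ and then the unique $T_{\mathrm{out}}$-path from $v$ to $t$. I would then verify that the dependency graph $\mathcal{D}$ is acyclic by exhibiting a potential: assign each $T_{\mathrm{in}}$-edge a negative value that decreases with its distance to $v$, and each $T_{\mathrm{out}}$-edge a positive value that increases with its distance from $v$. Every dependency---whether internal to $T_{\mathrm{in}}$, internal to $T_{\mathrm{out}}$, or the single cross-over turn at $v$---strictly increases this potential, so $\mathcal{D}$ has no cycle and the routing is deadlock-free by \cite{seminal}. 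Edge-disjointness is precisely what guarantees that each used channel receives a single well-defined potential.

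The substance lies in the necessity direction. Starting from a deadlock-free routing, \cite{seminal} gives an acyclic $\mathcal{D}$, so I fix a topological order $\phi\colon E \to \{1,\dots,|E|\}$ of its vertices (the channels). Since consecutive channels along any routing path form a dependency, $\phi$ strictly increases along every routing path; because the network is strongly connected and the scheme routes between all ordered pairs, this yields the key reformulation: there is a labeling $\phi$ of the edges of $G$ such that between every ordered pair of vertices there is a strictly $\phi$-increasing directed path. The theorem then reduces to a purely combinatorial statement: \emph{any edge-labeled strongly connected digraph that admits a strictly increasing directed path between every ordered pair of vertices contains edge-disjoint in- and out-arborescences sharing a common root.} A quick sanity check is that a directed cycle fails this hypothesis (the labels would have to increase all the way around), matching the fact that it admits no deadlock-free routing.

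I would prove the combinatorial statement by induction on $|V|$, the crucial observation being that the common root need not be chosen in advance. It suffices to locate a single vertex $u$ whose deletion preserves the increasing-path property on $V \setminus \{u\}$. Given such a $u$, the inductive hypothesis supplies edge-disjoint arborescences at some root $r \neq u$ of $G - u$; I then reattach $u$ as a common leaf, adding one in-edge of $u$ to extend the out-arborescence so that $r$ reaches $u$, and one out-edge of $u$ to extend the in-arborescence so that $u$ reaches $r$ (both exist by strong connectivity). These two added edges are distinct, and being incident to the freshly reintroduced vertex $u$ they are automatically disjoint from the recursively constructed trees, which closes the induction.

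The main obstacle is this \emph{removable-vertex lemma}: that some vertex can always be deleted without destroying increasing connectivity. I would attack it via an extremal edge---say the globally maximum-labeled edge $e^{\ast} = (z,v)$---and argue that its head $v$ is removable. If $v$ were essential, then for some pair $s,t \neq v$ every increasing $s \to t$ path would be forced through $v$; splicing together the increasing subpaths that avoid $v$ (the prefix reaching $v$, the suffix leaving $v$, and increasing paths between the intermediate vertices guaranteed by the hypothesis), and playing them against the extremality of $e^{\ast}$, should produce a chain of label inequalities that closes into a cycle and contradicts the existence of the linear order $\phi$---exactly the obstruction seen in the three-vertex case. Making this splicing rigorous, in particular controlling the label ranges so the spliced path remains increasing, is where I expect the real work to lie; a possible fallback is to instead verify the cut conditions of an arc-disjoint arborescence packing theorem directly from the increasing-path property.
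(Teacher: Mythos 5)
Your sufficiency argument (route through the root, order the tree edges by a potential that rises from $T_{\mathrm{in}}$ into $T_{\mathrm{out}}$) is correct, and your reduction of necessity to the increasing-path reformulation is exactly the paper's starting point: acyclicity of the dependency graph from \cite{seminal} yields a total order on edges along which every routing path strictly ascends. The gap is the heart of your necessity proof, the \emph{removable-vertex lemma}, and your proposed candidate for the removable vertex is demonstrably wrong. Take vertices $z, z', v, t$ with edges $z \leftrightarrow z'$, $z' \leftrightarrow v$, $v \leftrightarrow t$, plus one extra edge $z \to v$, labeled $(z,z')=1$, $(z',v)=2$, $(v,t)=3$, $(t,v)=4$, $(v,z')=5$, $(z',z)=6$, $(z,v)=7$. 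Every ordered pair of vertices is joined by a strictly increasing path, yet the globally maximum edge is $(z,v)$, whose head $v$ is a cut vertex: deleting $v$ disconnects $t$. (With parallel edges, which the paper explicitly allows, a three-vertex version works: $z \leftrightarrow v$, $v \leftrightarrow t$, plus a second copy of $z \to v$ carrying the top label.) The extremal intuition fails because the maximum edge can be a ``terminal'' in-edge of $v$, used only by routes that end at $v$, so its maximality never constrains the through-traffic that makes $v$ essential. Whether \emph{some} vertex is always removable may well be true, but you have not proven it, your splicing sketch gives no rule for choosing the vertex, and your fallback is a dead end: as the paper notes citing \cite{np-complete}, deciding the existence of the two edge-disjoint arborescences is NP-complete, so no tractable cut-condition characterization in the style of Edmonds' arborescence packing theorem can exist for this mixed in/out packing.

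The paper sidesteps vertex deletion entirely and instead runs an extremal argument on the edge order itself. It defines the attraction number $s$ (the shortest prefix of the order whose edges already make some vertex reachable from all others) and proves two lemmas: if the prefix subgraph $G_s$ has a \emph{unique} global attractor $v$, then $v$ has no outgoing edges in $G_s$, so by ascent of routing paths all routes out of $v$ avoid $G_s$, and the in-tree (inside $G_s$) and out-tree (outside $G_s$) are automatically edge-disjoint (\Cref{lemma:single}); if instead the attractor set $R$ has two or more vertices, pushing the $R$-internal prefix edges to the end of the prefix still respects every dependency (no route uses an $R$-internal edge followed by a prefix edge leaving $R$, since no such leaving edges exist) and strictly decreases the attraction number (\Cref{lemma:multiple}). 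Choosing a dependency-respecting order that minimizes the attraction number then forces the single-attractor case. To rescue your induction you would need a correct removable-vertex lemma, and the right candidate is something like the paper's unique-attractor vertex rather than the head of the maximum edge; as it stands your proposal establishes sufficiency and the correct reformulation, but not the theorem.
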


\textbf{Sufficiency} of the two-tree condition of \Cref{thm:main} for the existence of deadlock-free routing follows from the fact that given such trees, any message can be routed by first sending it to the common root $v$ along the first tree, and then sending it to its target along the second tree.

The two-tree condition was known to be sufficient for deadlock-free routing. For example, \cite{routage, survey} established this fact, while \cite[Section~4.4]{method} uses such two trees to construct a deadlock-free routing for a 4x4 mesh. However, to the best of our knowledge, the necessity of this condition has neither been recognized nor proven.

To prove \textbf{necessity} we explicitly construct the two required trees from a network with a given deadlock-free routing. Using the result of \cite{seminal}, the dependency graph induced by the deadlock-free routing is acyclic, so there is a dependency-respecting total ordering of the graph edges $E$ such that every vertex is reachable from every other vertex through a sequence of edges that are ascending according to the total order.

Given a total ordering of the edges, we introduce three definitions: a global attractor, the attraction number and the attraction subgraph.

\begin{definition}
A vertex in a directed graph is called a \textbf{global attractor} if it is reachable from all other vertices.
\end{definition}

\begin{definition}
Given a strongly connected directed graph $G=(V, E)$ with a total ordering of its edges $E = (e_1, ..., e_m)$, we define its \textbf{attraction number} $s$ to be the smallest integer such that there exists a vertex $v \in V$ reachable from all other vertices using only the edges in the prefix $\{e_1, ..., e_s\}$.
\end{definition}

The existence of such an $s$ follows from the fact that $G$ is strongly connected. We note that the attraction number is the minimal size of the prefix of the edges needed for the graph to have a global attractor. For the purpose of this definition, reachability using edges $\{e_1, ..., e_s\}$ is not restricted to paths that respect the total ordering of the edges.

\begin{definition}
Given a strongly connected directed graph $G=(V, E)$ with a total ordering of its edges $E = (e_1, ..., e_m)$, we define its \textbf{attraction subgraph} to be the subgraph $G_s = (V, E_s)$ with the same vertices but only the first $s$ edges in $E$: $E_s = (e_1, ..., e_s)$, where $s$ is the attraction number of the graph.
\end{definition}

By definition, the attraction subgraph has a global attractor.

\begin{lemma}
\label{lemma:single}
Let $G=(V, E)$ be a strongly connected directed graph with a total ordering of its edges $E = (e_1, ..., e_m)$ that is consistent with the dependencies induced by a deadlock-free routing.

If there is only a single global attractor $v$ in the attraction subgraph $G_s$, then the two-trees required by \Cref{thm:main} exist in $G$.
\end{lemma}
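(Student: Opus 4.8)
The plan is to produce the two required arborescences as edge-disjoint subgraphs by splitting the edge set at the attraction number: the in-tree will live entirely inside the prefix $E_s = \{e_1, \dots, e_s\}$, and the out-tree entirely inside the suffix $E \setminus E_s = \{e_{s+1}, \dots, e_m\}$, so that edge-disjointness becomes automatic. The in-tree is essentially free: since $G_s$ has $v$ as a global attractor, every vertex reaches $v$ using only edges of $E_s$, and a breadth-first search tree oriented toward $v$ in $G_s$ yields a spanning in-arborescence $T_{\mathrm{in}}$ rooted at $v$ whose edges all lie in $E_s$.

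The crux is the out-tree, and this is where the single-attractor hypothesis enters. I would first show that having a \emph{unique} global attractor $v$ forces $\{v\}$ to be the unique sink (terminal) strongly connected component of $G_s$, and in particular that $v$ has no outgoing edge in $E_s$. The point is that the set of global attractors of any digraph, when nonempty, is exactly its unique sink strongly connected component: any two attractors reach each other and hence lie in one component, that component must be a sink (a vertex outside it can still reach every attractor, while a sink cannot be escaped), and a second sink component would contain attractors unreachable from the first. If this set is the singleton $\{v\}$, then $v$ can have no out-edge inside $G_s$, since such an edge would leave the sink component. Proving this equivalence cleanly is the main obstacle: one must verify both that all attractors share a common sink component and that uniqueness rules out every out-edge of $v$ among $e_1, \dots, e_s$.

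With that in hand, the out-tree follows from the ascending-path property guaranteed by the dependency-respecting ordering. For every vertex $u$ there is a path from $v$ to $u$ whose edge indices strictly increase. Because $v$ has no out-edge in $E_s$, the first edge of this path has index greater than $s$, and since the indices only increase along the path, \emph{every} edge of the path has index greater than $s$; hence the whole path lies in $E \setminus E_s$. Thus $v$ reaches every vertex within the suffix subgraph $(V, E \setminus E_s)$, and a spanning out-arborescence $T_{\mathrm{out}}$ rooted at $v$ can be extracted there.

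Finally, since $T_{\mathrm{in}} \subseteq E_s$ while $T_{\mathrm{out}} \subseteq E \setminus E_s$, the two trees are edge-disjoint by construction; they are rooted at the common vertex $v$ and oriented into and out of $v$ respectively, which is precisely the configuration demanded by \Cref{thm:main}. I expect the argument to use the single-attractor hypothesis only through the ``no out-edge of $v$ in $E_s$'' reformulation, and not to need the minimality of $s$ beyond what the definitions of the attraction number and attraction subgraph already supply.
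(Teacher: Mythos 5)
Your proposal is correct and takes essentially the same route as the paper's proof: establish that the unique global attractor $v$ has no outgoing edge in $E_s$, conclude that every ascending (routing) path from $v$ begins and hence remains in $E \setminus E_s$, and extract the in-tree from $E_s$ and the out-tree from the suffix, making edge-disjointness automatic. The one cosmetic difference is that the paper obtains ``no out-edge of $v$ in $G_s$'' in a single line (the head of such an edge would itself be reachable from all vertices, contradicting uniqueness of the attractor), so your detour through the sink strongly connected component, while valid, is heavier machinery than the step requires.
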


\begin{proof}
If $v$ is the only vertex reachable from all vertices in $G_s$, no other vertex in $G_s$ can be reachable from it, so there are no outbound edges of $v$ in $G_s$. Therefore, routing messages from $v$ to other vertices according to the given deadlock-free routing cannot use any of the edges included in $G_s$, since the first step in such a route uses an edge not in $G_s$, and subsequent steps must use edges ascending according to the total order. Therefore, messages from $v$ to other vertices cannot use any edge in $G_s$, implying that routing from $v$ to all other vertices can be achieved without using edges of $G_s$. Since $v$ is reachable from all vertices using edges in $G_s$ only, and all vertices are reachable from $v$ using edges not in $G_s$, it is possible to construct two edge-disjoint trees as required by \Cref{thm:main}.
\end{proof}

\begin{lemma}
\label{lemma:multiple}
Given a strongly connected directed graph $G=(V, E)$ with a total ordering of its edges $E = (e_1, ..., e_m)$ that respects the dependencies induced by a deadlock-free routing, if there is more than one global attractor in the attraction subgraph $G_s$, then the total ordering of the edges can be modified such that the new total order still respects the deadlock-free routing, but the attraction number of the new total order is smaller than the attraction number of the original total order.
\end{lemma}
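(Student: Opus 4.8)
The plan is to recast the statement in the language of lower sets of the dependency DAG $\mathcal{D}$ and then to exhibit a strictly smaller one that still admits a global attractor. First I would observe that the prefixes $\{e_1,\dots,e_t\}$ of a total order respecting $\mathcal{D}$ are exactly the \emph{lower sets} of $\mathcal{D}$ (edge sets closed under taking $\mathcal{D}$-predecessors), and conversely every lower set is realized as the prefix of some valid order by topologically sorting it first and the remaining edges afterwards. Under this dictionary, modifying the order so as to lower the attraction number is equivalent to producing a lower set of size strictly less than $s$ that possesses a global attractor. Thus it suffices to show that the current prefix $U=\{e_1,\dots,e_s\}$, when it has more than one global attractor, properly contains such a lower set.

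Next I would pin down the structure of $U$. The global attractors of $U$ are precisely the vertices of its unique sink strongly connected component $A$: any two attractors reach each other; no edge of $U$ leaves $A$, since its head would otherwise be an attractor outside $A$; and every vertex reaches $A$ by definition. The hypothesis gives $|A|\ge 2$, so $A$ is strongly connected on at least two vertices and contains at least one internal edge. The key object is $h$, the internal edge of $A$ of largest order-index. I would prove two facts about $h$. First, $h$ is $\mathcal{D}$-maximal in $U$: any $\mathcal{D}$-successor of $h$ that lies in $U$ leaves the head $q$ of $h$ and, since no edge of $U$ leaves $A$, is another internal edge of $A$, necessarily of strictly larger index, contradicting the choice of $h$; hence $U\setminus\{h\}$ is again a lower set. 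Second, deleting $h=(p,q)$ leaves its tail $p$ a global attractor: inside $A$ every vertex reaches $p$ along a simple path, and such a path cannot use the out-edge $h$ of $p$, so $p$ remains reachable from all of $A$ in $A\setminus\{h\}$; every external vertex reaches $A$ before any internal edge is used and then reaches $p$ inside $A\setminus\{h\}$. Hence $U\setminus\{h\}$ is a lower set of size $s-1$ with global attractor $p$, and translating back through the dictionary yields a valid order of attraction number at most $s-1<s$.

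The step I expect to be the main obstacle, and the place where the hypothesis $|A|\ge 2$ is really spent, is ensuring that the edge I delete is \emph{not} the critical last edge $e_s$. The very same tail-reachability argument applied to $e_s$ shows that if $e_s$ lay inside $A$ then $\mathrm{tail}(e_s)$ would already be a global attractor of $G_{s-1}=U\setminus\{e_s\}$, contradicting the minimality of the attraction number $s$. Therefore $e_s$ cannot be an internal edge of $A$; it must instead be a bridge feeding $A$ from outside, which forces $h\ne e_s$ and makes $U\setminus\{h\}$ a genuinely smaller lower set. I would present this as the crux: the extra attractor is exactly the redundancy of the cycle inside $A$, the edge $h$ is the reorderable witness of that redundancy, and the bridge status of $e_s$ is what guarantees $h$ and $e_s$ are distinct.
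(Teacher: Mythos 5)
Your proposal is correct and is, at its core, the paper's own argument recast in lower-set language: your maximal internal edge $h$ of the sink component $A$ (the paper's attractor set $R$) is exactly the edge that becomes the $s$-th edge after the paper's permutation pushing $C_{R \not\to R}$ ahead of $C_{R \to R}$, and your two key observations (no edge of $G_s$ leaves $A$, hence no dependency from an $A$-internal prefix edge into the rest of the prefix; the tail of an internal edge remains a global attractor once that edge is dropped, since simple paths to a vertex avoid its out-edges) are precisely the facts the paper uses, producing the same size-$(s-1)$ prefix with the same attractor. The only divergence is your closing paragraph, which is superfluous rather than the crux: the lower set $U \setminus \{h\}$ of size $s-1$ with global attractor $\mathrm{tail}(h)$ yields a dependency-respecting order of attraction number at most $s-1$ in every case, so nothing hinges on $h \neq e_s$ (and the hypothesis of two attractors is really spent earlier, in guaranteeing that $A$ contains an internal edge at all).
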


\begin{proof}
Let $R$ be the set of global attractor vertices of $G_s$. By the lemma's assumption, $R$ contains at least two vertices.

Observe that no edges in $G_s$ originate from vertices in $R$ and terminate at vertices outside of $R$, otherwise the destination vertices of such edges would have been reachable from all vertices as well, so they should have been included in $R$. We partition the edges of $G_s$ into two sets: edges between vertices in $R$ and the remaining edges of $G_s$. We denote these two sets by $C_{R \to R}$ and $C_{R \not\to R}$ respectively. Since there are no edges from $R$ to other vertices in $G_s$, the given deadlock-free routing cannot route messages along a path that uses an edge in $C_{R \to R}$ followed by an edge in $C_{R \not\to R}$.

We can use this fact to modify the original total ordering of the first $s$ edges of the original graph $G$ by placing all edges of $C_{R \not\to R}$ before the edges of $C_{R \to R}$. The order of edges within $C_{R \not\to R}$ and $C_{R \to R}$ remains unchanged. Moreover, all edges beyond the first $s$ edges retain their original place in the total order. The resulting total order still respects all dependencies of the given deadlock-free routing.

Observe that the attraction number induced by the new total order is not larger than $s$, the attraction number of the original total order, because the first $s$ edges of the new total order are exactly the same $s$ edges of the original total order, up to permutation.

\Cref{lemma:multiple} assumes there is more than one global attractor in $G_s$, so $C_{R \to R}$ cannot be empty. This means that the $s$-th edge in the new total order must connect vertices that are reachable from all vertices in $G_s$. Let $n_1$ be the source vertex of this edge. Since $n_1$ is reachable from all vertices in $G_s$ and its outbound edges cannot be used to reach it, we know that $n_1$ is also reachable from all vertices using only the first $s-1$ edges of the new total order.

We have proved that the attraction number of the new total order is at most $s-1$, proving \Cref{lemma:multiple}.
\end{proof}

We can now prove the main result using the above lemmas.

\begin{proof}[Proof of \Cref{thm:main}]
Given a deadlock-free routing for a directed graph $G=(V, E)$ we choose a specific total ordering of the edges $E$ respecting the routing dependencies that minimizes the attraction number induced by the total order.

If the attraction subgraph had more than one global attractor, we would have been able to use \Cref{lemma:multiple} to construct a new total order with a smaller attraction number, in contradiction to the fact that the chosen total order minimizes the attraction number.

Since the attraction subgraph only has a single global attractor, \Cref{lemma:single} can be applied to obtain the required trees.
\end{proof}

The flow of this proof is illustrated in \Cref{fig:proof_flow}. \Cref{fig:example-proof} illustrates the lemmas and \Cref{thm:main}'s proof through a specific example network.

The proof of \Cref{thm:main} is constructive. In fact, we can turn it into an algorithm that efficiently constructs the desired trees from a given deadlock-free routing, by repeatedly applying \Cref{lemma:multiple} until \Cref{lemma:single} can be applied.

Deciding whether a directed graph contains a pair of edge-disjoint directed trees, one towards the root and another away from the root, is known to be NP-complete \cite{np-complete}. Thus, \Cref{thm:main} shows that no efficient algorithm exists for deciding whether a deadlock-free routing can be constructed for an arbitrary network, let alone constructing such a routing.

\medskip

\printbibliography

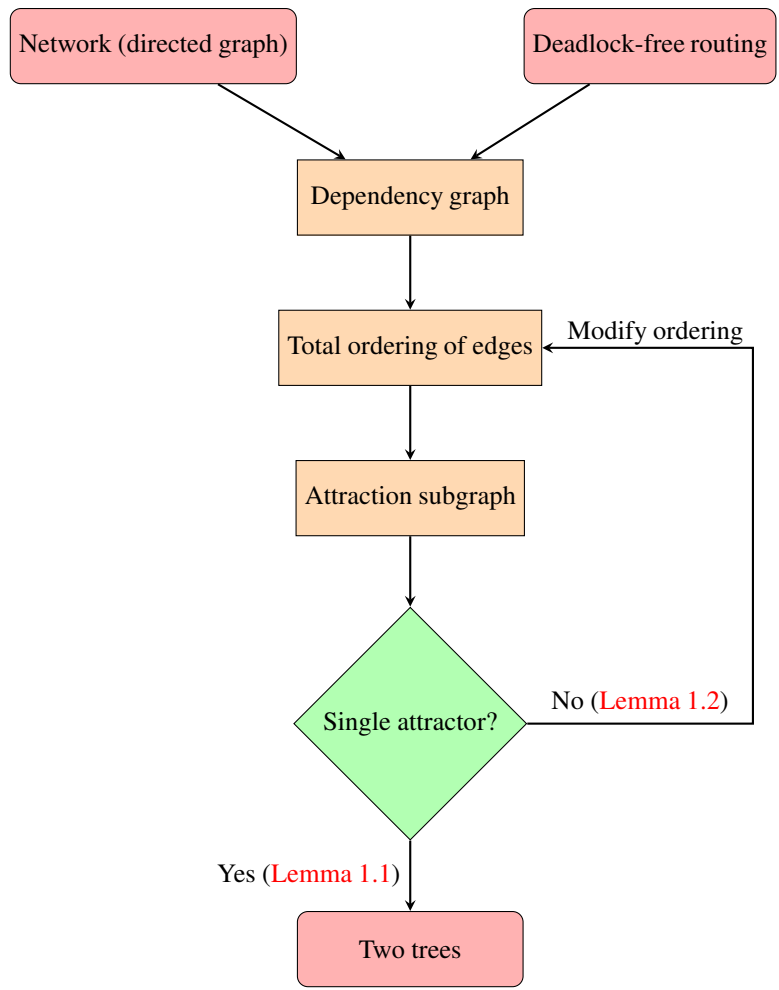
\begin{figure}[ht]
\centering
\begin{tikzpicture}[node distance=2cm]
\node (pro1) [process] {Dependency graph};
\node (start) [startstop, above left = 1cm and 0cm of pro1] {Network (directed graph)};
\node (routing) [startstop, above right = 1cm and 0cm of pro1] {Deadlock-free routing};
\node (pro2) [process, below of=pro1,yshift=0cm] {Total ordering of edges};
\node (pro3) [process, below of=pro2,yshift=0cm] {Attraction subgraph};
\node (dec1) [decision, below of=pro3,yshift=-1cm] {Single attractor?};
\node (stop) [startstop, below of=dec1,yshift=-1cm] {Two trees};

\draw [arrow] (start) -- (pro1);
\draw [arrow] (routing) -- (pro1);
\draw [arrow] (pro1) -- (pro2);
\draw [arrow] (pro2) -- (pro3);
\draw [arrow] (pro3) -- (dec1);
\draw [arrow] (dec1) -- node[anchor=east] {Yes (\Cref{lemma:single})} (stop);
\draw [arrow] (dec1.east) -- node[anchor=south] {No (\Cref{lemma:multiple})} ++(3,0) |- node[anchor=east, yshift=0.2cm] {Modify ordering} (pro2.east);
\end{tikzpicture}
\caption{Flow of the proof of \Cref{thm:main}. Applying \Cref{lemma:multiple} reduces the attraction number of the total order, guaranteeing that eventually \Cref{lemma:single} can be applied to obtain the two trees required by \Cref{thm:main}.}
\label{fig:proof_flow}
\end{figure}

\begin{figure}[ht]
    \centering
    \begin{subfigure}{0.45\textwidth}
        \centering
        \begin{tikzpicture}[node distance=2cm]
            \node (v1) {$v_1$};
            \node[right of=v1] (v2) {$v_2$};
            \node[below of=v2] (v3) {$v_3$};
            \node[below of=v1] (v4) {$v_4$};
            
            \draw[->, bend right] (v1) to node[midway,fill=white] {\textcolor{blue}{2}} (v2);
            \draw[->, bend right] (v2) to node[midway,fill=white] {\textcolor{blue}{5}} (v3);
            \draw[->, bend right] (v3) to node[midway,fill=white] {\textcolor{blue}{6}} (v4);
            \draw[->] (v4) to node[midway,fill=white] {\textcolor{blue}{7}} (v1);
            \draw[->, bend right] (v2) to node[midway,fill=white] {\textcolor{blue}{1}} (v1);
            \draw[->, bend right] (v3) to node[midway,fill=white] {\textcolor{blue}{4}} (v2);
            \draw[->, bend right] (v4) to node[midway,fill=white] {\textcolor{blue}{3}} (v3);
        \end{tikzpicture}
        \caption{A total ordering of the edges, \\ respecting the routing dependencies.}
        \label{fig:order}
    \end{subfigure}
    \hfill
    \begin{subfigure}{0.5\textwidth}
        \centering
        \begin{tikzpicture}[node distance=2cm]
            \node (v1) {\textcolor{blue}{$v_1$}};
            \node[right of=v1] (v2) {\textcolor{blue}{$v_2$}};
            \node[below of=v2] (v3) {$v_3$};
            \node[below of=v1] (v4) {$v_4$};
            
            \draw[->, bend right, blue] (v1) to node[midway,fill=white] {\textcolor{blue}{2}} (v2);
            \draw[->, bend right, blue] (v2) to node[midway,fill=white] {\textcolor{blue}{1}} (v1);
            \draw[->, bend right] (v3) to node[midway,fill=white] {4} (v2);
            \draw[->, bend right] (v4) to node[midway,fill=white] {3} (v3);
        \end{tikzpicture}
        \caption{Attraction subgraph induced by (a). \\ 
        The global attractors are colored blue.}
        \label{fig:attraction-subgraph}
    \end{subfigure}
    \vfill
    \begin{subfigure}{0.5\textwidth}
        \centering
        \begin{tikzpicture}[node distance=2cm]
            \node (v1) {$v_1$};
            \node[right of=v1] (v2) {$v_2$};
            \node[below of=v2] (v3) {$v_3$};
            \node[below of=v1] (v4) {$v_4$};
            
            \draw[->, bend right, blue] (v1) to node[midway,fill=white] {\textcolor{blue}{4}} (v2);
            \draw[->, bend right, blue] (v2) to node[midway,fill=white] {\textcolor{blue}{3}} (v1);
            \draw[->, bend right] (v3) to node[midway,fill=white] {\textcolor{blue}{2}} (v2);
            \draw[->, bend right] (v4) to node[midway,fill=white] {\textcolor{blue}{1}} (v3);
        \end{tikzpicture}
        \caption{Reordering of the subgraph edges, \\
        pushing the blue edges to the end.}
        \label{fig:attraction-subgraph-reordered}
    \end{subfigure}
    \hfill
    \begin{subfigure}{0.45\textwidth}
        \centering
        \begin{tikzpicture}[node distance=2cm]
            \node (v1) {$v_1$};
            \node[right of=v1] (v2) {$v_2$};
            \node[below of=v2] (v3) {$v_3$};
            \node[below of=v1] (v4) {$v_4$};
            
            \draw[->, bend right] (v1) to node[midway,fill=white] {\textcolor{blue}{4}} (v2);
            \draw[->, bend right] (v2) to node[midway,fill=white] {\textcolor{blue}{5}} (v3);
            \draw[->, bend right] (v3) to node[midway,fill=white] {\textcolor{blue}{6}} (v4);
            \draw[->] (v4) to node[midway,fill=white] {\textcolor{blue}{7}} (v1);
            \draw[->, bend right] (v2) to node[midway,fill=white] {\textcolor{blue}{3}} (v1);
            \draw[->, bend right] (v3) to node[midway,fill=white] {\textcolor{blue}{2}} (v2);
            \draw[->, bend right] (v4) to node[midway,fill=white] {\textcolor{blue}{1}} (v3);
        \end{tikzpicture}
        \caption{Modified total order.}
        \label{fig:order-modified}
    \end{subfigure}
    \vfill
    \begin{subfigure}{0.45\textwidth}
        \centering
        \begin{tikzpicture}[node distance=2cm]
            \node (v1) {\textcolor{blue}{$v_1$}};
            \node[right of=v1] (v2) {$v_2$};
            \node[below of=v2] (v3) {$v_3$};
            \node[below of=v1] (v4) {$v_4$};
            
            \draw[->, bend right] (v2) to node[midway,fill=white] {\textcolor{blue}{3}} (v1);
            \draw[->, bend right] (v3) to node[midway,fill=white] {\textcolor{blue}{2}} (v2);
            \draw[->, bend right] (v4) to node[midway,fill=white] {\textcolor{blue}{1}} (v3);
        \end{tikzpicture}
        \caption{Modified attraction subgraph with a single global attractor ($v_1$). These edges can be used to construct the inbound tree.}
        \label{fig:attraction-subgraph-modified}
    \end{subfigure}
    \hfill
    \begin{subfigure}{0.45\textwidth}
        \centering
        \begin{tikzpicture}[node distance=2cm]
            \node (v1) {\textcolor{blue}{$v_1$}};
            \node[right of=v1] (v2) {$v_2$};
            \node[below of=v2] (v3) {$v_3$};
            \node[below of=v1] (v4) {$v_4$};
            
            \draw[->, bend right] (v1) to node[midway,fill=white] {\textcolor{blue}{4}} (v2);
            \draw[->, bend right] (v2) to node[midway,fill=white] {\textcolor{blue}{5}} (v3);
            \draw[->, bend right] (v3) to node[midway,fill=white] {\textcolor{blue}{6}} (v4);
        \end{tikzpicture}
        \caption{The outbound tree from $v_1$ to all vertices,
        using edges not used for the modified attraction subgraph.}
        \label{fig:inbound-tree}
    \end{subfigure}
    \caption{An application of \Cref{thm:main} to the example network and routing given in \Cref{fig:example-deps}.}
    \label{fig:example-proof}
\end{figure}
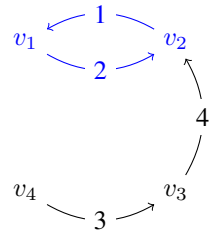
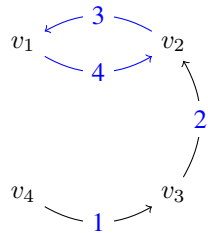
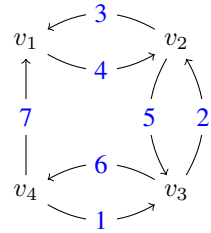
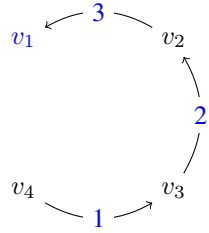
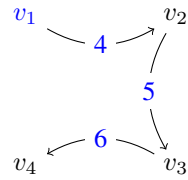

\end{document}